\documentclass[12pt,leqno]{amsart}
\usepackage{graphicx}
\usepackage[utf8]{inputenc}
\usepackage[
backend=biber,
style=alphabetic,
sorting=nyt
]{biblatex}
\usepackage[T1]{fontenc}
\usepackage{lmodern}
\usepackage{geometry}
\usepackage{booktabs}
\usepackage{xcolor}
\usepackage{amsthm}
\usepackage{amsmath,amssymb}
\usepackage{listings}
\usepackage{emptypage}
\usepackage{newlfont}
\usepackage{enumerate}
\usepackage{mathrsfs}
\usepackage{tikz-cd}
\usepackage{soul}
\newcommand{\numberset}{\mathbb}
\newcommand{\N}{\numberset{N}}
\newcommand{\Z}{\numberset{Z}}
\newcommand{\R}{\numberset{R}}
\newcommand{\C}{\numberset{C}}
\renewcommand{\H}{\numberset{H}}
\renewcommand{\P}{\numberset{P}}
\renewcommand{\O}{\numberset{O}}

\renewcommand{\S}{\numberset{S}}
\newcommand{\RP}{\mathbb{RP}}
\newcommand{\CP}{\mathbb{CP}}
\newcommand{\HP}{\mathbb{HP}}

\newcommand{\CPn}{\mathbb{CP}^{n}}
\newcommand{\HPn}{\mathbb{HP}^{n}}
\newcommand{\OPP}{\O\mathbb{P}^{2}}
\newcommand{\OP}{\O\mathbb{P}}

\newcommand{\msuF}{M/\mathcal{F}}
\newcommand{\F}{\mathcal{F}}
\renewcommand{\L}{\mathcal{L}}

\newcommand{\fol}{\mathcal{F}}
\newcommand{\hili}[1]{#1}

\makeatletter
\newcommand\footnoteref[1]{\protected@xdef\@thefnmark{\ref{#1}}\@footnotemark}
\makeatother

\theoremstyle{plain} 
\newtheorem{thm}{Theorem}[section]
 
\newtheorem{lem}[thm]{Lemma}

\newtheorem*{thm*}{Theorem}
\newtheorem*{cor*}{Corollary} 
\newtheorem*{lem*}{Lemma} 
\newtheorem*{prop*}{Proposition}
\newtheorem*{rmk*}{Remark}
\newtheorem{mainthm}{Theorem}

\theoremstyle{definition}

\newtheorem*{defn*}{Definition}
\newtheorem*{ex*}{Example}

\theoremstyle{remark}

\begin{document}

\pagebreak

\title{Riemannian Foliations On Homotopy CROSSes}

\author{Marco Radeschi}
\address{Universit\`a degli Studi di Torino\newline
\indent Departimento di Matematica ``G. Peano''\newline
\indent Via Carlo Alberto, 10\newline
\indent 10123 Torino (TO), Italy}
\email{marco.radeschi@unito.it}

\author{Lorenzo Scoffone}
\address{KIT\newline
\indent Kollegiengebäude Mathematik\newline
\indent Englerstraße 2\newline
\indent 76131 Karlsruhe, Germany}
\email{lorenzo.scoffone@kit.edu}

\author{Michael Wiemeler}
\address{Universität Münster\newline
\indent Mathematisches Institut\newline
\indent Einsteinstraße 62\newline
\indent 48149 Münster, Germany}
\email{wiemelerm@uni-muenster.de}

\begin{abstract}
We classify Riemannian foliations of manifolds homotopy equivalent to CROSSes.
\end{abstract}
\maketitle

\section{Introduction}

The problem of classifying Riemannian foliations on round spheres took a surprisingly long time to be fully solved, spanning several work of several people \cite{Ran85}, \cite{gromollgrove}, \cite{Wil01}, \cite{lytchakwilking}. In \cite{lytchakwilking} the problem was finally solved, in fact for Riemannian foliations on homotopy spheres:

\begin{thm}[\cite{lytchakwilking}]
    \label{thmfoliationsonspheres}
    Consider a $k$-dimensional Riemannian foliation $\mathcal{F}$ of a Riemannian manifold $(M,g)$ homotopic to $\S^{n}$. Assuming $0<k<n$, one of the following holds:
    \begin{enumerate}[(i)]
        \item $n=2l+1$ for some $l\in\N_{>0}$, $k=1$ and the foliation is given by an isometric flow, up to changing the Riemannian metric.
        \item $n=4l+3$ for some $l\in\N_{>0}$, $k=3$ and the generic leaves are diffeomorphic to $\S^{3}$ or $\RP^{3}$.
        \item $n=15$, $k=7$ and $\mathcal{F}$ is simple, given by the fibres of a Riemannian submersion $(M,g)\to(B,g_{B})$ with $(B,g_{B})$ homeomorphic to $\S^{8}$ and with fibres homeomorphic to $\S^{7}$.
    \end{enumerate}
    Furthermore all these cases can occur.
\end{thm}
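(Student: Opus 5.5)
The strategy is the Lytchak--Wilking one: first show that the leaves are closed, then read off the possible leaf dimensions from the long exact homotopy sequence of a (rational) fibration $L\to M\to M/\mathcal{F}$.

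\emph{Step 1: closed leaves.} Suppose $\mathcal{F}$ has non-closed leaves. By Molino theory the closures $\overline{\mathcal{F}}$ form a singular Riemannian foliation. Passing to the universal cover if needed (here $M$ is already simply connected), the Molino sheaf gives a transverse abelian Killing structure. Because $\pi_1(M)=0$ and $M$ is compact, one shows that the closures are orbits of an isometric torus action in a suitable metric. Inside each closure, $\mathcal{F}$ is a linear foliation. A homotopy-sphere computation then forces $k=1$, giving case (i): the torus acts with a dense one-parameter subgroup, and averaging the metric realizes $\mathcal{F}$ as an isometric flow. The general fact used here is that foliations with non-closed leaves on a simply connected manifold with finite $\pi_2$ have abelian structural algebra, so $\overline{\mathcal{F}}$ is homogeneous. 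This is the step I expect to be the main obstacle, in particular the rigidity that $k=1$ in the non-closed case.

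\emph{Step 2: the fibration.} Assume now that all leaves are closed. Lytchak--Wilking's key result is that the generic leaf $L$ has finite $\pi_1$, and that there is a rational homotopy fibration $L\to M\to B$, where $B$ is the homotopy quotient $M_{\mathcal{F}}$ (the classifying space of the holonomy groupoid). This uses that for a manifold homotopy equivalent to $\S^n$, $\pi_2(M)=0$, so leaves cannot have infinite fundamental group. Since $M$ is rationally $\S^n$, the long exact sequence together with Sullivan-model arguments forces $L$ to be a rational sphere $\S^{k}$ with $k$ odd. Moreover $B$ has the rational cohomology of a truncated polynomial algebra on a generator of degree $k+1$, and $n+1$ is a multiple of $k+1$.

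\emph{Step 3: restrict $k$.} By Adams' Hopf invariant one theorem (applied via the finite-dimensional quotient and the torsion-free cohomology needed for Steenrod operations), only $k\in\{1,3,7\}$ can occur. For $k=7$, equivariant or Borel cohomology considerations show that $\mathcal{F}$ has no singular holonomy and $n=15$. One then gets a Riemannian submersion onto a homotopy $\S^8$, which is homeomorphic to $\S^8$, and the fibres are homotopy $7$-spheres, hence homeomorphic to $\S^7$. This gives case (iii). For $k=3$, $n\equiv 3 \pmod 4$, and the leaves are closed $3$-manifolds with finite $\pi_1$ covered by rational-sphere fibrations. Using the classification of spherical space forms together with the holonomy restrictions, one shows the generic leaf is $\S^3$ or $\RP^3$, giving case (ii).

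\emph{Step 4: existence.} The Hopf fibrations realize all three cases, and quotienting by $\mathbb{Z}_2\subset \mathrm{Sp}(1)$ gives $\RP^3$ leaves.
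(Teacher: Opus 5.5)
The paper does not prove this statement; it quotes it from \cite{lytchakwilking}. Read as a reconstruction of that proof, your outline asserts the decisive steps rather than proving them, and there are genuine gaps.

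The most serious gap is in Step~3. In the closed-leaf case, Step~2 yields only rational information, since $B$ is rationally the orbifold $M/\mathcal{F}$: $L$ is a rational $\S^k$ with $k$ odd, and $H^*(B;\mathbb{Q})$ is truncated polynomial on a class of degree $k+1$. Such data cannot restrict $k$. For every odd $k$, the Whitehead square $[\iota_{k+1},\iota_{k+1}]\colon\S^{2k+1}\to\S^{k+1}$ has Hopf invariant $\pm2$ and its homotopy fibre is a rational $\S^k$. So the rational picture $\S^k\to\S^{2k+1}\to\S^{k+1}$ exists for all odd $k$.

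To apply Adams, or Theorem~5.1 of \cite{browder} (which is how the paper argues in Section~2, once it knows the foliation is simple), you need an honest fibration of finite complexes with integral or mod~$2$ control. Concretely, you must first show that there are no exceptional leaves, so that $M/\mathcal{F}$ is a manifold, and that the leaves are integral homotopy spheres. Your outline never does this. A priori $M/\mathcal{F}$ is only an orbifold whose cohomology may have torsion, and $\pi_1(L)\cong\pi_2(B)$ may be a nontrivial finite group; the statement itself allows $\RP^3$. The ``torsion-free cohomology'' is simply assumed, and the absence of holonomy is claimed only afterwards and only for $k=7$. So $k=5,9,11,\dots$ are never excluded. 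Similarly, for $k=3$, abelian $\pi_1(L)$ only gives a lens space; bounding its order by $2$ is the actual content of that case, and ``holonomy restrictions'' does not supply it.

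Step~1 also fails as written. Simple connectivity alone, with no $\pi_2$ hypothesis, gives an abelian structural algebra (Molino), i.e.\ commuting \emph{transverse} Killing fields. These do not integrate to a torus action on $M$, and $\overline{\mathcal{F}}$ need not be homogeneous. For example, on $\S^3\times\S^{15}$ (where $\pi_1=\pi_2=0$), the product of an irrational linear flow with the Hopf $\S^7$-fibration has leaf closures $T^2\times\S^7$, which are not torus orbits. So the reduction to a torus is unsupported, and that is where the work lies. By contrast, once leaf closures are known to be orbits of an isometric torus action, Borel's formula for torus actions on rational homology spheres gives an orbit of dimension $\le 1$, hence a leaf of dimension $\le1$. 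The ``homotopy-sphere computation'' you leave open is therefore the easy part.

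You also never treat $k=1$ with closed leaves. The conclusion ``isometric flow after changing the metric'' needs a real argument, for instance tautness (the \'Alvarez class lies in $H^1(M;\mathbb{R})=0$) combined with the Molino--Sergiescu characterization of isometric flows. Averaging over a torus you do not have does not suffice.

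There are also two smaller errors:
\begin{enumerate}
\item In Step~2, $\pi_2(M)=0$ does not make $\pi_1(L)$ finite. The exact sequence gives $\pi_1(L)\cong\pi_2(B)$, and the Hopf circle fibration in case~(i) has $L=\S^1$. Finiteness for $k\ge3$ follows only after the rational computation.
\item In Step~4, dividing the Hopf $\S^3$-fibration by $\{\pm1\}\subset Sp(1)$ gives a foliation of $\RP^{4l+3}$, not of a homotopy sphere. The Hopf fibrations alone already show that all three cases occur.
\end{enumerate}
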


\hili{In this paper, we complete the classification of Riemannian foliations on manifolds homotopic to the remaining simply connected CROSSes, namely $\CP^n$, $\HP^n$ and $\OP^2$. Recall that, for any $m\geq 1$ the \emph{twistor bundle} $T:\CP^{2m+1}\to\HP^{m}$ is the Riemannian submersion with $\S^2$-fibers, given by}
$$\left[x_{0}:\ldots:x_{2m+1}\right]\mapsto\left[x_{0}+x_{1}j:\ldots: x_{2m}+x_{2m+1}j\right].$$

\begin{mainthm}
    \label{thmfoliationsonCROSSes}
    Consider a $k$-dimensional Riemannian foliation $(M,\mathcal{F})$, $0<k<\dim M$, of a Riemannian manifold $(M,g)$ \hili{homotopy equivalent to} a simply connected non-spherical CROSS. 
    Then:
    \begin{enumerate}
    \item $M$ is \hili{homotopy equivalent} to $\CP^{2m+1}$ for some $m\in\N_{>0}$ and the foliation is given by the fibers of an $\S^2$-bundle $\pi:(M,g)\to(B,g_{B})$, with $B$ \hili{homotopy equivalent to $\HP^m$. Furthermore, $\pi$ is obtained by pulling back the twistor bundle $T:\CP^{2m+1}\to\HP^{m}$ via a homotopy equivalence $B\to \HP^m$.}
    \item If $M$ is isometric to $\CP^{2m+1}$ with its canonical metric, then the Riemannian submersion $\CP^{2m+1}\to B$ is congruent to the twistor bundle.
    
    \end{enumerate}
\end{mainthm}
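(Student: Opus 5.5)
The plan is to follow the strategy of \cite{lytchakwilking}. First show that the foliation is simple, with compact leaves and trivial holonomy. Then use the topology of the leaves and of the quotient to pin down the dimensions. Finally identify the bundle, and handle the isometric case via rigidity.

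\textbf{Step 1: all leaves are closed.} Let $M$ be homotopy equivalent to $\CP^n$, $\HP^n$ or $\OPP$. Since $M$ is simply connected and compact, Molino theory applies. The closures of the leaves form a singular Riemannian foliation $\overline{\F}$. The leaves of $\F$ inside a generic leaf closure are dense leaves of a torus-like (abelian) structure, given by the structural Killing sheaf, which is constant since $\pi_1(M)=0$. In particular, if $\F\neq\overline{\F}$, there is an isometric action of a torus $T^r$, $r\geq1$, on $M$ (or on a suitable deformation of the metric, as in case (i) of Theorem~\ref{thmfoliationsonspheres}). Its orbits contain the leaves of $\F$ in their closures.

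I would rule this out, or reduce to a closed subfoliation, using rational homotopy. The leaves lie in fibers of $\overline{\F}$, and the Euler characteristic of $M$ is positive, so a torus action has fixed points. The Borel fibration then forces $\F$ to have compact leaves: a non-closed leaf would yield a leaf closure whose rational cohomology cannot fit into the cohomology ring $\mathbb{Q}[x]/(x^{n+1})$ of $M$. This is the same Euler-characteristic argument used for positive-curvature-type quotients. I expect this step to be the main obstacle. If the first attempt fails, I would try to show directly that the sheaf of transverse Killing fields must vanish, using that $\chi(M)\neq0$ and that the generic leaf closures are tori containing leaves with $\chi=0$.

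\textbf{Step 2: the leaves are fibers of a fiber bundle.} Once all leaves are compact, the leaf space $B=M/\F$ is an orbifold. I would argue that $\pi_1$ of the generic leaf surjects as in a fibration, and that simple connectivity of $M$ together with the long exact sequence forces trivial leaf holonomy. If some exceptional leaf existed, the orbifold $B$ would have nontrivial orbifold fundamental group at a singular stratum. Rationally, $M\to B$ would then be a rational fibration $L\to M\to B$.

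The Serre spectral sequence with $H^*(M;\mathbb{Q})$ a truncated polynomial algebra and $\chi(M)=\chi(L)\chi(B)\neq0$ forces $\chi(L)\neq0$. Since $H^*(M;\mathbb{Q})$ is generated by a single class, the possibilities for $L$ are severely limited. The leaves must be rational spheres of even dimension, and checking degrees shows that only $L\simeq_{\mathbb{Q}}\S^2$ with $M\simeq\CP^{2m+1}$ and $B\simeq_{\mathbb{Q}}\HP^m$ survives. Here we use the classification of fibrations of projective spaces, e.g.\ that $\HP^n$ and $\OPP$ admit no such fibrations with positive-dimensional fiber by the Borel--Hirzebruch / Steenrod-square arguments. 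Integral considerations with Steenrod powers then upgrade this to integral statements. Then $\S^2$ leaves with finite holonomy groups embedded in $O(\text{normal})$ and $\pi_1(M)=0$ give trivial holonomy, so $\pi:M\to B$ is a smooth $\S^2$-bundle and $B$ is a simply connected manifold homotopy equivalent to $\HP^m$.

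\textbf{Step 3: identifying the bundle.} I would classify $\S^2$-bundles over $B\simeq\HP^m$ with total space homotopy equivalent to $\CP^{2m+1}$. The structure group reduces to $SO(3)$, so the bundle is the sphere bundle of a rank-3 vector bundle. Composing with a homotopy equivalence $f:B\to\HP^m$, the requirement that the total space have $H^2\cong\Z$ with $x^{2m+2}=0$ and $x^{2m+1}$ a generator forces $w_2=0$ and the Pontryagin class to agree with that of the twistor bundle. I would then show that rank-3 bundles over $\HP^m$ are detected by these invariants in the relevant range, giving $\pi\cong f^*T$.

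\textbf{Step 4: the isometric case.} For (2), $\pi:\CP^{2m+1}\to B$ is a Riemannian submersion with $\S^2$ fibers from the round Fubini--Study metric. I would invoke the classification of Riemannian submersions from CROSSes (Escobales, Ranjan, Wilking), which says they are congruent to the Hopf fibrations. Alternatively, one can use that $\F$ is then homogeneous by Wilking's dual-foliation argument, and that isometric actions on $\CP^{2m+1}$ with $\S^2$ orbits are conjugate to the $Sp(m+1)$-twistor picture.
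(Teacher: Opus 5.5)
Step~1 is a genuine gap, and it is where the paper's key idea enters, which your plan lacks.

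\textbf{Step 1.} On a simply connected closed manifold, Molino theory gives a globally constant sheaf of abelian \emph{transverse} Killing fields whose orbits are the leaf closures. It does not give an isometric torus action on $M$. The change of metric in case (i) of Theorem~\ref{thmfoliationsonspheres} is a special result about flows on spheres, not a general consequence. Generic leaf closures are not tori in general, and you never actually give a cohomological obstruction to a non-closed leaf.

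The paper never proves closedness directly. Taking a smooth homotopy equivalence $f:M\to\CP^n$, it pulls $\F$ back along the principal circle bundle $f^*(\S^{2n+1})\to M$ (along the $Sp(1)$-bundle $f^*(\S^{4n+3})\to M$ if $M\simeq\HP^n$). The total space is a homotopy sphere, so Lytchak--Wilking applies to the lifted foliation of dimension $k+1$ (resp.\ $k+3$). This gives at once compact lifted leaves, $k+1\in\{3,7\}$ (resp.\ $n=3$, $k=4$), and the structure of leaves and leaf space. For $\OPP$ no foliation theory is needed: $w(TM)=1+c+c^2$ cannot split as a product of Stiefel--Whitney classes of two bundles of rank $<16$. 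Without this reduction you would essentially have to reprove Lytchak--Wilking for projective spaces.

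\textbf{Step 2.} This step rests on a false claim: rational homotopy plus $\chi(M)=\chi(L)\chi(B)$ does not single out $L\simeq_{\mathbb{Q}}\S^2$. For all $a,b\geq 1$ there are rational fibrations $\CP^{a-1}\to\CP^{ab-1}\to B_{a,b}$ with $H^*(B_{a,b};\mathbb{Q})=\mathbb{Q}[y]/(y^b)$, $|y|=2a$. Indeed, the relative Sullivan algebra $(\Lambda(y,w,x,z),\ dw=y^b,\ dz=x^a-y)$ with $|x|=2$ is quasi-isomorphic to $(\Lambda(x,w'),\ dw'=x^{ab})$. Examples are $\CP^2\to\CP^5\to\S^6$ and $\CP^3\to\CP^7\to\S^8$, and likewise $\S^4\to\HP^3\to\S^8$. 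So the leaves need not be rational spheres.

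The last two are exactly the cases that survive Lytchak--Wilking in the paper, and they need integral input your plan does not supply. The paper compares $p_1 \bmod 24$ of $M$ and of a fibre via \cite{Des02} to exclude $6$-dimensional leaves on a homotopy $\CP^7$. It then pulls back along the twistor fibration to reduce $\HP^3$ to that case. A mod~$3$ argument would also work ($P^1(c^4)=c^6\neq 0$ in $H^{12}(\CP^7;\Z_3)$, while $H^{12}(\S^8)=0$), but it has to be made.

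\textbf{Smaller points.}
\begin{enumerate}
\item $\pi_1(M)=0$ alone does not exclude exceptional $\RP^2$ leaves. The paper uses a Gysin sequence; orientability of $T\F$ over simply connected $M$ also works.
\item A ring isomorphism $H^*(B)\cong H^*(\HP^m)$ does not give $B\simeq\HP^m$. Step~3 does not need uniqueness of rank-$3$ bundles with given classes. Because $c_2$ of the $SU(2)$-lift generates $H^4(B)$, its classifying map is, after cellular approximation, itself a homotopy equivalence $B\to\HP^m$ along which the twistor bundle pulls back to $\pi$.
\item The second option in Step~4 fails: a holomorphic isometry preserving every twistor line is trivial, so the twistor foliation of $\CP^{2m+1}$ is not given by an isometric action. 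The paper lifts to the round $\S^{4m+3}$, where \cite{gromollgrove} gives a free $Sp(1)$-action conjugate to the standard one. It then shows the conjugating isometry can be chosen $U(1)$-equivariant.
\end{enumerate}
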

In particular, no non-trivial Riemannian foliation can occur on manifolds \hili{homotopy equivalent} to $\HP^{n}$ or $\OPP$, and any homotopy CROSS with a nontrivial Riemannian foliation has dimension $4m+2$ for some $m\geq 1$.

\hili{The theorem is sharp, in the sense that $M$ is not necessarily homeomorphic to $\CP^{2m+1}$} in dimensions $\geq 10$:

\begin{mainthm}\label{MT:exotic}
If $M$ is a $6$-dimensional homotopy CROSS equipped with a Riemannian foliation, then it is diffeomorphic to $\CP^{3}$. Conversely, for any $m\geq 2$ there are infinitely many $(4m+2)$-dimensional Riemannian manifolds $M_i$ homotopy equivalent but not homeomorphic to $\CP^{2m+1}$, endowed with a Riemannian foliation.
\end{mainthm}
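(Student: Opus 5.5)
The plan has two parts. The $6$-dimensional statement is a rigidity result. The higher-dimensional statement is a construction.

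\emph{Dimension $6$.} Let $M$ be a $6$-dimensional homotopy CROSS with a Riemannian foliation. By Theorem~\ref{thmfoliationsonCROSSes}, $M\simeq\CP^3$ and the foliation is the fibration of an $\S^2$-bundle $\pi:M\to B$ with $B\simeq\HP^1$. Thus $B$ is a homotopy $4$-sphere, and in fact a smooth closed manifold, because it is the leaf space of a simple Riemannian foliation. The bundle $\pi$ is the pullback of the twistor bundle $T:\CP^3\to\S^4$ along a homotopy equivalence $f:B\to\S^4$. Equivalently, $M$ is the sphere bundle of a rank-$3$ vector bundle $E\to B$ with $w_2=0$ and $p_1(E)=f^*p_1$. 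In dimension $4$ we cannot invoke smooth Poincaré, so the key step is to avoid knowing $B$ and use the classification of closed simply connected smooth $6$-manifolds instead. By Wall and Jupp, such a manifold with torsion-free homology and $H^2=\Z$ is determined up to diffeomorphism by the cup-product form, $w_2$, and $p_1$ (a linear form). Since $M$ is homotopy equivalent to $\CP^3$, the cubic form and $w_2$ agree with those of $\CP^3$. For $p_1$, I would compute it from the sphere-bundle structure: $TM\oplus\R\cong\pi^*TB\oplus\pi^*E$. Since $p_1(B)=0$ for a homotopy sphere (signature $0$), this gives $p_1(M)=\pi^*p_1(E)$. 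This has the same value as for $\CP^3$ because the bundle data are pulled back via a degree $\pm1$ map and $f^*$ is an isomorphism on $H^4$ identifying $p_1(E)$ with that of the twistor bundle. Hence $M\cong\CP^3$. The main obstacle here is the bookkeeping relating $\pi^*H^4(B)$ to the generator $x^2$ and matching the sign and orientation conventions; I would check this against the known case $B=\S^4$.

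\emph{Dimensions $4m+2$, $m\ge2$.} Take a smooth manifold $B_i$ homotopy equivalent to $\HP^m$, with homotopy equivalence $f_i:B_i\to\HP^m$. Set $M_i=f_i^*\CP^{2m+1}$, the pullback of the twistor bundle. Pick any Riemannian metric on $B_i$ and a connection (principal $SO(3)$-bundle metric) on the pulled-back bundle. The resulting connection (Kaluza--Klein) metric makes $\pi_i:M_i\to B_i$ a Riemannian submersion, so its fibres form a Riemannian foliation. Comparing the Serre spectral sequences of $\pi_i$ and $T$, with $f_i$ inducing an isomorphism, and applying Whitehead's theorem to the induced bundle map $M_i\to\CP^{2m+1}$, shows that $M_i\simeq\CP^{2m+1}$.

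To see that infinitely many $M_i$ are non-homeomorphic, I would use rational Pontryagin classes, which are homeomorphism invariants by Novikov. Surgery theory provides infinitely many homotopy $\HP^m$'s for $m\ge2$, distinguished by $p_1(B_i)\in H^4(B_i)\cong\Z$; the normal invariants in $[\HP^m,G/O]\otimes\mathbb{Q}$ realize infinitely many values of $p_1$. The stable splitting $TM_i\oplus\R=\pi_i^*(TB_i\oplus E_i)$ gives
\[
p_1(M_i)=\pi_i^*\bigl(p_1(B_i)+p_1(E_i)\bigr).
\]
The class $p_1(E_i)=f_i^*p_1(E)$ is fixed under the identification, and $\pi_i^*$ is injective on $H^4$. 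So infinitely many distinct values of $p_1(M_i)$ occur, measured against the generator $x^2$, where $x\in H^2(M_i)$ is canonical up to sign. Hence infinitely many of the $M_i$ are pairwise non-homeomorphic, and all but at most one are not homeomorphic to $\CP^{2m+1}$. The hardest point is citing the right realization result for $p_1$ of fake quaternionic projective spaces. For $m\ge2$ this should follow from the surjectivity of the surgery obstruction sequence in dimensions $\ge5$ together with the rational computation $[\HP^m,G/O]\otimes\mathbb{Q}\cong\bigoplus H^{4j}(\HP^m;\mathbb{Q})$; the realized normal invariants have lattice image, and this is where the most care is needed.
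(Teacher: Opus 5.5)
Your proposal is correct and follows essentially the same route as the paper. In dimension $6$ you obtain $p_1(M)$ from the bundle splitting together with $p_1(B)=0$ (signature theorem) and conclude with Wall's classification; for $m\ge 2$ you pull back the twistor fibration along homotopy $\HP^m$'s with different Pontryagin classes and invoke Novikov's topological invariance of rational Pontryagin classes.

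The one real difference is that the paper does not re-derive these homotopy $\HP^m$'s from the surgery exact sequence. It cites Hsiang's theorem \cite{Hsi66} for infinitely many homotopy $\HP^m$'s with mutually distinct rational Pontryagin classes, and it compares total Pontryagin classes rather than only $p_1$. This removes both points you left open: the lattice question, and whether $p_1$ itself (rather than some higher Pontryagin class) actually varies.
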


\subsection*{Acknowledgments}

The authors would like to thank Alexander Lytchak for suggesting the problem and for insightful discussions.

Michael Wiemeler was funded by the Deutsche Forschungsgemeinschaft (DFG, German Research Foundation) under Germany’s Excellence Strategy EXC 2044/2 –390685587, Mathematics Münster: Dynamics – Geometry – Structure and through CRC 1442, Geometry: Deformations and Rigidity at University of Münster.

\section{$M$ homotopic to $\CPn$}
\label{secCPn}
Fix some Riemannian manifold $(M,g)$ \hili{homotopy equivalent to} $\CPn$, and fix a \hili{homotopy equivalence} $f:M\to \CPn$. \hili{Since continuous maps can be approximated by smooth ones, we can assume that $f$ is a smooth map}. The Hopf fibration $H_n:\S^{2n+1}\to \CPn$ pulls back to a ``Hopf-like'' principal $\S^{1}$-bundle $\bar{H}_{n}:f^{*}(\S^{2n+1})\to M$:

\begin{center}
\begin{tikzcd}
f^*(\S^{2n+1})\arrow[r,"F"]\arrow[d,"\bar{H}_n"]& \S^{2n+1}\arrow[d,"{H}_n"]\\
M\arrow[r,"f"]&\CPn
\end{tikzcd}
\end{center}

\hili{$f^*(\S^{2n+1})$ is a smooth manifold, and $\bar{H}_n$ is a (smooth) principal bundle. Furthermore, by the long exact sequence in homotopy and the 5-lemma, it follows that $F$ induces an isomorphism between the homotopy groups of $f^*(\S^{2n+1})$ and $\S^{2n+1}$, and in particular $f^*(\S^{2n+1})$ is homeomorphic to $\S^{2n+1}$.}
We endow $f^*(\S^{2n+1})$ with a metric which makes the $\S^1$-action isometric, and the map $\bar{H}_n$ a Riemannian submersion.

Assume there exists a Riemannian foliation $(M,\mathcal{F})$ with $k$-dimensional fibres, with $0<k<2n$. The idea is to pullback $\mathcal{F}$ to a foliation $\bar{\fol}:=\bar{H}_{n}^{-1}(\fol)$ on $f^{*}(\S^{2n+1})$, and then use the classification given in \cite{lytchakwilking}.\\

Denote by $\mathcal{L}_{p}$ the leaf of $\mathcal{F}$ through $p\in M$ and by $\bar{\mathcal{L}}_{q}$ the corresponding leaf in $\bar{\mathcal{F}}$ through $q\in \bar{H}_{n}^{-1}(p)$. Then,
\[
\dim(\bar{\mathcal{L}_{q}})=\dim(\mathcal{L}_{p})+\dim((\bar{H}_{n})^{-1}(p))=\dim(\mathcal{L}_{p})+\dim(\S^{1})=k+1>1\text{.}
\]
By Theorem \ref{thmfoliationsonspheres}, $\dim(\bar{\mathcal{L}}_{q})=3$ or $7$, meaning $\dim(\mathcal{L}_{p})=2$ or $6$. We consider these cases separately, although with analogous methods.\\

From now on, we denote by $\mathcal{L}$ any leaf of $\fol$ and let $\bar{\mathcal{L}}=(\bar{H}_{n})^{-1}(\mathcal{L})\in\bar{\fol}$.

\subsection{Case 1: $\bar{\mathcal{F}}$ has $3$-dimensional fibres.}
We know by \cite{lytchakwilking} that, in this case, the generic leaf of $\bar{\fol}$ is diffeomorphic to $\S^{3}$ or $\RP^{3}$. This means that any leaf $\bar{\mathcal{L}}$ is covered by $\S^3$ and in particular $\pi_1(\bar{\mathcal{L}})$ is a finite group. Furthermore, by \cite{lytchakwilking} a $3$-dimensional foliation can occur only on a sphere of dimension $4m+3$ for some $m\in\N_{>0}$. Thus $n=2m+1$.\\

Note that the Hopf-like principal $\S^1$-bundle $\bar{H}_n:f^{*}(\S^{4m+3})\to M$ restricts to a principal $\S^{1}$-bundle $\S^{1}\to\bar{\mathcal{L}}\to\mathcal{L}$. If $\bar{\mathcal{L}}$ is diffeomorphic to $\S^{3}$, $\mathcal{L}$ is simply connected by the long exact sequence in homotopy of a fibration, and by the classification of compact and connected surfaces $\mathcal{L}$ is then diffeomorphic to $\S^{2}$.\\

To get the same result when $\bar{\mathcal{L}}$ is non-trivially covered by $\S^3$, we consider again the long exact sequence in homotopy of this fibration:
\[
    0=\pi_{2}(\bar{\mathcal{L}})\to\pi_{2}(\mathcal{L})\to\pi_{1}(\S^{1})\stackrel{\phi}{\to}\pi_1(\bar{\mathcal{L}})\to\pi_{1}(\mathcal{L})\to0\text{.}
\]
By the first part of this sequence, $\pi_{2}(\mathcal{L})\cong\ker(\phi)\neq0
$ (since $\pi_1(\bar{\mathcal{L}})$ is finite): the only two-dimensional manifolds for which this is true are $\S^{2}$ or $\RP^{2}$. Let us now try to exclude the $\RP^{2}$ case. The fibration $\S^{1}\to\bar{\mathcal{L}}\to\mathcal{L}$ is orientable, so that we can consider its Gysin sequence:
\[
    \ldots\to H^{1}(\bar{\mathcal{L}};\Z)\to H^{0}(\mathcal{L};\Z)\to H^{2}(\mathcal{L};\Z)\to\ldots
\]
Now, $H^{1}(\bar{\mathcal{L}};\Z)=\mathrm{Hom}(\pi_1(\bar{\mathcal{L}}),\Z)=0$ since $\Z$ has no torsion. Therefore, $H^{0}(\mathcal{L};\Z)=\Z$ has an injective image into $H^{2}(\mathcal{L};\Z)$. On the other hand, $H^2(\mathcal{L};\Z)=\Z$ or $\Z_{2}$ if $\mathcal{L}\cong\S^{2}$ or $\RP^{2}$, respectively. Then necessarily $H^{2}(\mathcal{L};\Z)=\Z$ and $\mathcal{L}$ is homeomorphic to $\S^{2}$ and, in particular, simply connected.

Since all leaves are simply connected, by Theorem 2.2 of \cite{Escobales} the leaf space $B:=M/\mathcal{F}$ is a Riemannian manifold and $M\to B,\ p\mapsto\mathcal{L}_{p}$ is a Riemannian submersion\footnote{Since the leaves are simply connected, the leaf holonomy groups used in \cite{Escobales} are trivial: see \cite{Hermann}, p.448.}: $\mathcal{F}$ is \emph{simple}.

Note that, given a Riemannian submersion $\pi:N\to M$ and a Riemannian foliation $(M,\fol)$ whose leaves are given by fibres of some Riemannian submersion $\pi':M\to B$, then the leaves of the pullback foliation $(N,\pi^{-1}(\fol))$ are given by the fibres of $\pi'\circ\pi$, which is again a Riemannian submersion by composition. Therefore, the pullback of a simple Riemannian foliation is again simple, and $N/\pi^{-1}(\fol)=B=M/\fol$.\\

Thus in our case $(f^{*}(\S^{4m+3}),\bar{\fol})$ is simple as well, given by the fibres of a Riemannian submersion $f^{*}(\S^{4m+3})\to B$. Since $f^*(\S^{4m+3})$ is a topological sphere, Theorem 5.1 of \cite{browder} implies that $\bar\L\simeq \S^3$ and (by the discussion in the introduction of \cite{browder}, or simply by considering the Gysin sequence of the fibration $\S^3\to f^*(\S^{4m+3})\to B$), $H^*(B;R)\simeq H^*(\HP^m;R)$ for any ring $R$.

\hili{The free $\S^1$-action on $f^*(\S^{4m+3})$ restricts to the $\S^3$ fibers of $f^*(\S^{4m+3})\to B$, thus the fibers of $M=f^*(\S^{4m+3})/\S^1\to B$ are diffeomorphic to $\S^2$. The fact that $B$ is homotopy equivalent to $\HP^m$ and $M\to B$ is the pullback of the twistor fibration then follows from the following Lemma:}
\begin{lem}
\hili{Let $\S^2\to M \to B$ be an $\S^2$-bundle over a simply connected base $B$ with $H^*(B;\Z)=H^*(\HP^m;\Z)$ such that $\dim M=4m+2$ and $M$ has the cohomology of
$\CP^{2m+1}$, $m\geq 1$. Then $B$ is homotopy equivalent to $\HP^m$, and $M\to B$ is the pull-back of the twistor fibration $\CP^{2m+1}\to \HP^m$ via the homotopy equivalence $B\to \HP^m$.}
\end{lem}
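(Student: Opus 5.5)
Since $\S^2$-bundles come from $SO(3)$-bundles, the plan is to reduce everything to the characteristic classes of an associated rank $3$ vector bundle and to the map $B\to\HP^\infty$ that the $\S^2$-bundle determines. First we show that the structure group of $M\to B$ reduces to $SO(3)$, so $M=S(E)$ for an oriented rank $3$ vector bundle $E\to B$. By Smale, the diffeomorphism group of $\S^2$ deformation retracts onto $O(3)$. Since $B$ is simply connected, the bundle is orientable, so we get $SO(3)$.

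Next we compute the characteristic classes of $E$ from the Gysin sequence and the Leray--Hirsch/Serre spectral sequence. The spectral sequence of $\S^2\to M\to B$ has $E_2=H^*(B)\otimes H^*(\S^2)$, which is concentrated in even degrees, so it degenerates. Hence $H^*(M)$ is free over $H^*(B)=\Z[u]/(u^{m+1})$, $|u|=4$, on $1$ and a class $x\in H^2(M)$ restricting to a generator of $H^2(\S^2)$. The ring relation has the form $x^2=a\,u+b\,x$ with $b=0$ in degree $4$ (there is no $H^2(B)$), so $x^2=\lambda\,\pi^*u$. Because $H^*(M)\cong\Z[x]/(x^{2m+2})$, the class $x$ generates, and $x^2$ must generate $H^4(M)\cong\Z$. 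This forces $\lambda=\pm1$, and after replacing $u$ by $-u$ we may take $\lambda=1$. In particular $w_2(E)=0$: the relation $x^2=p_1(E)$-type relation for sphere bundles of rank $3$ (namely $x^2=\tfrac{?}{}$) shows $E$ admits a spin structure, because otherwise $H^2(M)$ would not contain an integral class restricting to a generator. Indeed, a generator class exists iff the bundle lifts to $\S^3=Spin(3)$, i.e.\ $M=P\times_{\S^3}\S^2=P/\S^1$ for a principal $\S^3$-bundle $P\to B$.

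With a principal $\S^3=Sp(1)$-bundle $P\to B$ in hand, $M=P/\S^1$, and we take the classifying map $c:B\to\HP^\infty$. The relation $x^2=\pm\pi^*u$ identifies $c^*$ of the generator of $H^4(\HP^\infty)$ with $\pm u$, a generator of $H^4(B)$. Then $c^*$ is surjective on cohomology, hence an isomorphism up to degree $4m$. We then argue that $c$ compresses into $\HP^m$. The pair $(\HP^\infty,\HP^m)$ is $(4m+3)$-connected and $B$ is a $4m$-dimensional manifold, so by cellular approximation we get $g:B\to\HP^m$ with $g^*$ an isomorphism on integral cohomology. Both spaces are simply connected CW complexes, so Whitehead's theorem makes $g$ a homotopy equivalence. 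Finally, $P$ is the pullback of the Hopf bundle $\S^{4m+3}\to\HP^m$ along $g$, since $c\simeq i\circ g$ and the restriction of the universal bundle to $\HP^m$ is the Hopf bundle. Taking the quotient by $\S^1$ shows that $M=P/\S^1$ is $g^*$ of the twistor fibration $\S^{4m+3}/\S^1=\CP^{2m+1}\to\HP^m$.

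The main obstacle is the spin lift, i.e.\ $w_2(E)=0$. I would prove it via the Gysin/transgression argument: in the Serre spectral sequence of the associated $SO(3)$-frame bundle, the class $x$ restricting to a generator of $H^2(\S^2;\Z)$ exists iff the integral Euler-type obstruction $W_3(E)=\beta w_2(E)$ vanishes. Over $B$ with $H^2(B;\Z_2)=0$ we automatically have $w_2=0$, and that gives the lift directly. Identifying the generator up to sign via $x^2$ is then the only computation requiring care.
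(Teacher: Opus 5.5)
\textbf{Verdict: the overall route is right, but the decisive step is asserted rather than proved.} Your outline matches the paper's: Smale's theorem, lift to $Sp(1)=SU(2)$, the classifying map $c\colon B\to\HP^\infty$, a cohomology isomorphism onto $\HP^m$, and pulling back the Hopf/twistor bundle. Your cellular-approximation-plus-Whitehead step is actually more explicit than the paper's.

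\textbf{The gap.} Your relation $x^2=\lambda\,\pi^*u$ comes only from the ring $H^*(M)\cong\Z[x]/(x^{2m+2})$, so it carries no information about $c$. The sentence ``the relation $x^2=\pm\pi^*u$ identifies $c^*$ of the generator of $H^4(\HP^\infty)$ with $\pm u$'' presupposes what must be shown: that $\pi^*c^*(q)=\pm x^2$ for a generator $q\in H^4(\HP^\infty;\Z)$. This is exactly where your placeholder relation is left blank.

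The paper closes this step by writing $M=\P(V)$ with $V=P\times_{SU(2)}\C^2$ and using the projective bundle formula $H^*(M;\Z)=H^*(B;\Z)[x]/(x^2+c_2(V))$, where $c_1(V)=0$ and $c_2(V)=c^*(q)$.

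Alternatively, you can argue by naturality:
\begin{itemize}
\item The principal bundle $P\to P/\S^1=M$ gives a map $\tilde c\colon M\to B\S^1=\CP^\infty$ covering $c$.
\item Here $\CP^\infty\to\HP^\infty$ is the universal $\S^2$-bundle (the limit of the twistor maps), and it pulls $q$ back to $\pm y^2$, with $y$ a generator of $H^2(\CP^\infty;\Z)$.
\item On a fibre $Sp(1)/\S^1=\S^2$, the class $\tilde c^*y$ is the Euler class of the Hopf bundle, which is a generator. Hence $\tilde c^*y=\pm x$.
\item Therefore $\pi^*c^*(q)=\pm x^2=\pm\pi^*u$, and injectivity of $\pi^*$ gives $c^*(q)=\pm u$.
\end{itemize}
In terms of $E$, the relation you were reaching for is $4x^2=\pi^*p_1(E)$, together with $p_1(E)=-4c_2(V)$.

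\textbf{A false side remark.} Drop the claim that a class restricting to a generator of $H^2(\S^2;\Z)$ exists if and only if $E$ lifts to $Spin(3)$. The obstruction to such a class is $e(E)=W_3(E)=\beta w_2(E)$, and this can vanish with $w_2(E)\neq 0$. For example, take $E=L\oplus\underline{\R}$ over $\CP^1$ with $L$ the tautological line bundle.

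The remark does no harm to the proof, because you also give the correct reason for the lift: $H^2(B;\Z_2)=0$, which follows from $H_1(B)=H_2(B)=0$ by universal coefficients. This is the paper's ``$B$ is $2$-connected''. It does mean the spin lift is not the main obstacle; the identification of $c^*(q)$ above is.
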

\begin{proof}
By Smale's theorem \cite{Sma}, the
structure group of the bundle $M\to B$ reduces from $\operatorname{Diff}(\S^2)$ to $\operatorname{SO}(3)$. Furthermore, since $B$ is $2$-connected by Hurewicz, the structure group lifts to $\operatorname{Spin}(3)=\operatorname{SU}(2)$. Thinking of $\S^2$ as $\P(V_0)=(V_0\setminus \{0\})/\C^*$ with $V_0=\C^2$, it follows that the $\S^2$-bundle $M$ is the projectivization of some $\C^2$-vector bundle $V\to B$. From e.g. \cite[Equation (20.6)]{BT82} one has an isomorphism
\[
H^*(M;\Z)=H^*(B;\Z)[x]/(x^2+c_2(V))
\]

where $x\in H^2(M;\Z)$ is the first Chern class of the tautological bundle
over $M=\P(V)$ and $c_2(V)$ is the second Chern class of $V$.

Since $x\in H^2(M;\Z)$ is a generator and $H^*(M;\Z)\simeq H^*(\CP^{2m+1};\Z)$, in particular $x^2$ generates $H^4(M;\Z)$ and thus $c_2(V)$ generates $H^4(B;\Z)$.

Hence the classifying map for $V$, $\varphi:B\to B\operatorname{SU}(2)=\HP^{\infty}$, whose induced map
\[
\varphi^*:H^*(\HP^\infty;\Z)\simeq \Z[x]\to H^*(B;\Z)\simeq \Z[x]/(x^{m+1})
\]
sends $x\in H^4(\HP^\infty;\Z)$ to $c_2(V)$ generator of $H^4(B;\Z)$, in particular induces an isomorphism between the integral cohomology ring of $B$ and that of the $(4m+1)$-skeleton $X= \HP^m$ of $\HP^\infty$. Therefore, $\varphi:B\to X$ induces a homotopy equivalence. Finally, $M\to B$ is the pull-back of the universal bundle
\[
\S^2\to \S^2\times_{\operatorname{SU}(2)}E\operatorname{SU}(2)\to B\operatorname{SU}(2)
\]
This bundle is homotopy equivalent to
\[
\operatorname{SU}(2)/\S^1\to B\S^1\to B\operatorname{SU}(2)
\]

via the homeomorphism
\[
\S^2\times_{\operatorname{SU}(2)}E\operatorname{SU}(2)\to B\S^1=E\operatorname{SU}(2)/\S^1,\qquad [gv_0,x] \mapsto [xg^{-1}]
\]
with $v_0\in \S^2$ fixed a priori. A model of $B\S^1$ (resp. $B\operatorname{SU}(2)$) is the direct limit of $\CP^n$'s (resp. of $\HP^n$), and the map $B\S^1\to B\operatorname{SU}(2)$ is the direct limit of the twistor fibrations
\[
\S^2\to \CP^{2m+1}\to \HP^m.
\]
In particular, the restriction of this bundle to $X\simeq \HP^m$ is the twistor fibration. Since the classifying map $\varphi$ factors through $B\to X\to \HP^\infty$, it follows that $M\to B$ is the pull back of the twistor fibration.
\end{proof}

The case of $M$ isometric to $\CP^{2m+1}$ with its canonical metric will be discussed later in Section \ref{secCPncanonical}.

\subsection{Case 2: $\bar{\mathcal{F}}$ has $7$-dimensional fibres.}\label{SS:case2}
We know that in this case the only possibility is $2n+1=15$, so that $n=7$. Proceeding like before, we get:\\

\begin{center}
\begin{tikzcd}
f^*(\S^{15})\arrow[r,"F"]\arrow[d,"\bar{H}_7"]& \S^{15}\arrow[d,"{H}_7"]\\
M\arrow[r,"f"]&\CP^7
\end{tikzcd}
\end{center}

In this case, $\bar{\mathcal{L}}$ is homeomorphic to $\S^{7}$ and just like before $\S^{1}\to\S^{7}\to\mathcal{L}$ is a fibration and $\dim(\mathcal{L})=6$. Since $\S^{1}$ is connected and $\pi_{1}(\S^{7})=0$, we get $\pi_{1}(\mathcal{L})=0$. As before, Theorem 2.2 of \cite{Escobales} implies that $B:=\msuF$ is a manifold and $\mathcal{F}$ is simple, and from Theorem \ref{thmfoliationsonspheres} we conclude that $B$ is homeomorphic to $\S^{8}$. \hili{ Furthermore, the free $\S^1$-action on $f^*(\S^{15})$ restricts to the $\S^7$ fibers of $\bar{H}_7$, hence the fibers of $M\to B$ are quotients $\S^7/\S^1$, hence homotopy equivalent to $\CP^3$.}
We will prove that such a foliation cannot exist on $M$.\\

Write $H^*(M;\Z)\simeq \Z[c]/(c^8)$. The Riemannian submersion $\pi:M\to B$ (whose fibres are $6$-dimensional) induces a splitting $TM\cong\mathcal{H}\oplus\mathcal{V}$, where $\mathcal{V}=\ker d\pi$ is the \emph{vertical distribution}, and $\mathcal{H}=\mathcal{V}^\perp$ is the \emph{horizontal distribution}. In particular, the total Pontryagin class $p(TM)\in H^*(M;\Z)$ decomposes as a product $p(TM)=p(\mathcal{V})\cup p(\mathcal{H})$. 
\\

\hili{The inclusion of a fiber $\CP^3\simeq L\stackrel{i}{\to} M$ induces a map}
\[
i^*:H^4(M;\Z)\to H^4(L;\Z).
\]
\hili{Since $B$ is homotopic to $\S^8$, by the spectral sequence of $M\to B$ the map $H^4(M;\Z)\to H^4(L;\Z)$ is an isomorphism. Since $i^*(\mathcal{V})=TL$, it follows that}$$i^*p_1(TM)=p_1(TL).$$

\hili{On the one hand, since $M$ is homotopy equivalent to $\CP^7$, it follows by}
\cite[proof of Theorem 5.1]{Des02}
\hili{that $p_1(TM)\equiv p_1(T\CP^7)=8c^2\mod 24$. 

On the other hand, since $L$ is a homotopy $\CP^3$, again by} \cite[proof of Theorem 5.1]{Des02} \hili{it follows that}
\[
p_1(TL)\equiv p_1(T\CP^3)=4c^2\quad \mod 24
\]
\hili{providing a contradiction.}

\section{$\CP^{2n+1}$ with its canonical metric}
\label{secCPncanonical}

Let us now consider the case of $M$ isometric to $\CP^{2m+1}$ with its canonical metric. In this section, we prove that any Riemannian foliation is metrically congruent to the foliation by the fibers of the twistor fibration $T:\CP^{2m+1}\to \HP^m$
\[
T[x_0:\ldots:x_{2n+1}]=[x_0+x_1j:\ldots :x_{2n}+x_{2n+1}j]
\]
To avoid confusion, in this section we will use the following notation:
\begin{itemize}
    \item $\H$ is spanned by quaternionic units $1,i,j,k$ and can be identified with $\R^4$ via $a+bi+cj+dk\mapsto (a,b,c,d)$.
    \item $Sp(1)\subset \H$ denotes the subset of quaternions of unit norm. Elements in $Sp(1)$ will be denoted with letters $p,q,r$.
    \item Given $p\in Sp(1)$ and $v\in \H^n$, the left multiplication of $v$ by $p$ is denoted $pv$. Similarly, given a matrix $A\in O(4n)$ and a vector $v\in \H^n\simeq \R^{4n}$, the multiplication of $A$ and $v$ is denoted $Av$.
    \item Any other action of $Sp(1)$ on $\H^n\simeq \R^{4n}$ is denoted by $p\cdot v$.
\end{itemize}
Proceeding like before by taking a Riemannian foliation $(\CP^{2m+1},\mathcal{F})$ and pulling back through the (standard) complex Hopf fibration $H_{2m+1}:\S^{4m+3}\to\CP^{2m+1}$, we obtain a Riemannian foliation $(\S^{4m+3},\bar{\mathcal{F}})$ of the sphere with its canonical metric.

By \cite{gromollgrove}, $\bar{\mathcal{F}}$ is homogeneous, i.e. given by the orbits of a representation $\tau:Sp(1)\to O(4m+4)$ inducing an action $p\cdot v:=\tau(p)v$ for any $p\in Sp(1)$ and $v\in \R^{4m+4}$. On the other hand, we proved in the previous section that $\bar{\mathcal{F}}$ is a simple foliation given by the fibres of the Riemannian submersion $\pi:\S^{4m+3}\to B$. This implies that the action induced by $\tau$ is free on $\S^{4m+3}$. By the classification of such actions (cf. \cite[Corollary 5.4]{gromollgrove}), by identifying $\R^{4m+4}$ with $\H^{m+1}$ in the usual way, the representation $\tau$ is equivalent to the diagonal embedding $Sp(1)\to Sp(m+1)\to O(4m+4)$. In other words, there exists an isometry $\phi: \H^{m+1}\to \H^{m+1}$ such that
\[
p\cdot v=\phi (p\phi^{-1}(v)),\qquad \forall p\in Sp(1),\quad v\in \H^{m+1}.
\]

Recall furthermore that, in our case, $\bar{\mathcal{F}}$ is the pullback of $\mathcal{F}$ via the \emph{standard} Hopf fibration $\S^{4m+3}\to \CP^{2m+1}$. In particular, the $Sp(1)$-orbits of $\tau$ contain the orbits of the standard complex Hopf fibration. Under these assumptions, we prove:

\begin{lem}
There exists a homomorphism $\rho:U(1)\to Sp(1)$, $\rho(z)=rzr^{-1}$ for some $r\in Sp(1)$, such that $\rho(e^{it})\cdot v=e^{it}v$.
\end{lem}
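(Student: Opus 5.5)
The plan is to linearize the statement and reduce it to a rigidity fact about linear maps whose values stay inside quaternionic lines. Conjugating by $\phi$, set $J:=\phi^{-1}\circ i\circ\phi$, where $i$ denotes the complex structure $v\mapsto iv$ on $\H^{m+1}$ generating the standard Hopf action. Then $J$ is an orthogonal linear map of $\H^{m+1}$ with $J^2=-1$, and $e^{tJ}=\phi^{-1}e^{it}\phi$. The lemma is equivalent to finding $r\in Sp(1)$ with $Jw=(rir^{-1})w$ for all $w\in\H^{m+1}$. Indeed, then $\exp(t\,rir^{-1})=re^{it}r^{-1}=\rho(e^{it})$, and
\[
\rho(e^{it})\cdot v=\phi\bigl(\rho(e^{it})\phi^{-1}(v)\bigr)=\phi\bigl(e^{tJ}\phi^{-1}(v)\bigr)=e^{it}v .
\]

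First I will extract from the hypothesis that $Jw$ is tangent to the $Sp(1)$-orbit of $w$. Since the $\tau$-orbits contain the Hopf circles, for each $v\neq0$ the curve $t\mapsto e^{it}v$ lies in the orbit $Sp(1)\cdot v=\phi(Sp(1)\phi^{-1}(v))$. Differentiating at $t=0$ gives $iv\in\phi(\mathrm{Im}(\H)\,\phi^{-1}(v))$. Writing $w=\phi^{-1}(v)$, this says $Jw=\xi(w)w$ for some $\xi(w)\in\mathrm{Im}\,\H$. Because left multiplication $\mathrm{Im}\,\H\to\H^{m+1}$, $\xi\mapsto\xi w$, is injective for $w\neq0$, the element $\xi(w)$ is uniquely determined. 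No smoothness of $\xi$ is needed; the rest of the argument is purely algebraic.

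The key step, which I expect to be the only real content, is to show that $\xi$ is constant. I will use linearity of $J$ together with $m+1\geq2$. If $w_1,w_2$ are $\H$-linearly independent, then
\[
\xi(w_1+w_2)(w_1+w_2)=J(w_1+w_2)=\xi(w_1)w_1+\xi(w_2)w_2 .
\]
Comparing coefficients in the left $\H$-span of $w_1,w_2$ gives $\xi(w_1)=\xi(w_1+w_2)=\xi(w_2)$. If instead $w_1,w_2$ are $\H$-dependent and nonzero, I choose a third vector $w_3$ that is $\H$-independent of both, which is possible because $\dim_\H\H^{m+1}\geq2$. Then $\xi(w_1)=\xi(w_3)=\xi(w_2)$. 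Hence $\xi\equiv\xi_0$ is constant and $J$ is left multiplication by $\xi_0$.

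Finally, $J^2=-1$ (equivalently, orthogonality of $J$) forces $\xi_0^2=-1$, so $\xi_0$ is a unit imaginary quaternion. Every unit imaginary quaternion is conjugate to $i$, so $\xi_0=rir^{-1}$ for some $r\in Sp(1)$. The computation in the first paragraph then yields the homomorphism $\rho(z)=rzr^{-1}$ with the required property.
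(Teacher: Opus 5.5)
Your proposal is correct and follows essentially the same route as the paper. The paper picks, for each $v$, the unique $q\in Sp(1)$ with $q\cdot v=iv$ (using the point $e^{i\pi/2}v$ of the Hopf circle), shows that $q$ does not depend on $v$ by the same linearity argument with $\H$-independent vectors and an auxiliary third vector, and then deduces $q^2=-1$. Your version works infinitesimally, with $\xi(w)\in\mathrm{Im}\,\H$ in place of $q$; this makes explicit the $\R$-linear extension of the action to all of $\H$, which the paper uses implicitly when it writes $(q''-q)\cdot v_0$ and $(\cos t+\sin t\,q)\cdot v$.
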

\begin{proof}
For any $v\in \H^{n+1}$, let $\ell_v=\operatorname{span}(v,iv, jv, kv)\simeq \H$ denote the standard quaternionic line through $v$. Since the $Sp(1)$-action is free and isometric, for any $v\in \H^{n+1}$ the map $Sp(1)\to \H^{n+1}$, $p\mapsto p\cdot v$ defines a bijection between $Sp(1)$ and the unit sphere of $\phi(\ell_{\phi^{-1}(v)})$. Since the $Sp(1)$-action contains the complex Hopf fibers in its orbits, fixing a vector $v_0$, there is a (unique) $q\in Sp(1)$ such that $q\cdot v_0=iv_0$. 

We claim that $q\cdot v=iv$ for any $v\in \H^{n+1}$. Given $v'\in \H^{n+1}$ there exist unique $q'$ such that $q'\cdot v'=iv'$. Similarly, for $v_0+v'$ there exits a unique $q''$ such that $q''\cdot (v_0+v')=i(v_0+v')$. Therefore:
\[
    q''\cdot v_0+q''\cdot v'=q''\cdot (v_0+v')=iv_0+iv'=q\cdot v_0+q'\cdot v'
\]
\[
    \Rightarrow\quad (q''-q)\cdot v_0+(q''-q')\cdot v'=0.
\]
If $\phi^{-1}(v_0)$, $\phi^{-1}(v')$ belong to different quaternionic lines, then $\H\cdot v_0\cap \H\cdot v'=\{0\}$ and thus $q=q''=q'$, thus proving the claim in this case. If $\phi^{-1}(v_0)$, $\phi^{-1}(v')$ belong to the same quaternionic line, then by taking $v''$ such that $\phi^{-1}(v'')$ belongs to a different quaternionic line, we would get $q=q''=q'$, thus proving the claim anyway.

Notice that $(q^2)\cdot v_0=(i)^2v_0=-v_0=(-1)\cdot v_0$ and again by the bijectivity of $Sp(1)\to \H^{n+1}$, it follows that $q^2=-1$. This means that $q=ri\bar{r}$ for some $r\in Sp(1)$. Finally, let $\rho:U(1)\to Sp(1)$ be given by $\rho(z)=rzr^{-1}$. Then for any $v\in \H^{n+1}$,
\begin{align*}
\rho(e^{it})\cdot v&=(r(\cos t+i\sin t)r^{-1})\cdot v=(\cos(t)+\sin(t)q)\cdot v\\
&=\cos(t)v+ \sin(t)iv=e^{it}v.
\end{align*}
\end{proof}

Define now $\psi:\R^{4n+4}\to \R^{4n+4}$ by $\psi(v)=r^{-1}\phi(v)$. Then:
\begin{enumerate}
\item The equation of the lemma gives:
\begin{align*}
\phi(e^{it}v)=&\phi(\rho(e^{it})\cdot v)= \rho(e^{it})\phi(v)=re^{it}r^{-1}\phi(v)\\
\Rightarrow \psi(e^{it}v)=&r^{-1}\phi(e^{it}v)=e^{it}\psi(v).
\end{align*}

Thus $\psi:\S^{4n+3}\to \S^{4n+3}$ induces a map $\bar{\psi}:\CP^{2n+1}\to\CP^{2n+1}$. 
\item $\psi$ sends orbits of the $Sp(1)$-action induced by $\tau$ to orbits of the standard $Sp(1)$-action: in fact, for a fixed $v\in \H^{n+1}$
\begin{align*}
\psi\left(\{g\cdot v\mid g\in Sp(1)\}\right)=&\left\{\psi(g\cdot v)\mid g\in Sp(1)\right\}\\
=&\left\{r^{-1}\phi(g\cdot v)\mid g\in Sp(1)\right\}\\
=&\left\{r^{-1}g\phi(v)\mid g\in Sp(1)\right\}\\
=&\left\{h\phi(v)\mid h\in Sp(1)\right\}\\
\end{align*}
 In particular, $\psi:\S^{4n+3}\to \S^{4n+3}$ is a foliated isometry, inducing a map $\tilde{\psi}:B\to\HP^n$ which commutes with the projections.
\end{enumerate}

To summarize: there exists a foliated isometry $\psi:(\S^{4m+3},\bar{\mathcal{F}})\to (\S^{4m+3},\tilde{\mathcal{F}})$ (where $\tilde{\mathcal{F}}$ is the foliation by the fibers of the standard quaternionic Hopf action), which is in addition $U(1)$-equivariant with respect to the standard complex Hopf action on both sides. This induces isometries $\bar{\psi}:\CP^{2m+1}\to \CP^{2m+1}$ and $\tilde{\psi}:B\to \HP^m$ which fit in the following commutative diagram:

\begin{center}
\begin{tikzcd}
\S^{4m+3}\arrow[d, "\psi"]\arrow[r, "\bar H_{2m+1}"]& \CP^{2m+1}\arrow[d,"\bar{\psi}"] \arrow[r]& B\arrow[d,"\tilde\psi"]\\
\S^{4m+3}\arrow[r, "H_{2m+1}"]& \CP^{2m+1} \arrow[r,"T"]& \HP^{m}
\end{tikzcd}
\end{center}

In particular, the Riemannian submersion $\CP^{2m+1}\to B$ is congruent to the standard twistor fibration $T$.

\section{$M$ \hili{homotopic} to $\HPn$}

For $\HPn$, consider a \hili{smooth homotopy equivalence} $f:M\to\HP^{n}$ and the quaternionic Hopf fibration $H_{n}:\S^{4n+3}\to\HP^{n}$. We can once again construct the following ``Hopf-like'' principal $Sp(1)$-bundle:

\begin{center}
\begin{tikzcd}
f^*(\S^{4n+3})\arrow[r,"F"]\arrow[d,"\bar{H}_n"]& \S^{4n+3}\arrow[d,"{H}_n"]\\
M\arrow[r,"f"]&\HPn
\end{tikzcd}
\end{center}

Again $F:f^{*}(\S^{4n+3})\to\S^{4n+3}$ is a \hili{homotopy equivalence}, and we can endow $f^{*}(\S^{4n+3})$ with a metric that makes $\bar{H}_{n}$ a Riemannian submersion and pullback $\fol$ through $\bar{H}_{n}$ to get the Riemannian foliation $(f^{*}(\S^{4n+3}),\bar{H}_{n}^{-1}(\fol)=:\bar{\fol})$.\\

With the same notation and arguments as before, 
\[
\dim(\bar{\mathcal{L}_{q}})=\dim(\mathcal{L}_{p})+\dim((\bar{H}_{n})^{-1}(p))=\dim(\mathcal{L}_{p})+\dim(Sp(1))=k+3>3\text{.}
\]

By Theorem \ref{thmfoliationsonspheres}, $4n+3=15$ so that $n=3$ and $\bar{\mathcal{L}_{q}}$ is homeomorphic to $\S^7$, meaning $\dim(\mathcal{L}_{p})=4$.

Consider the twistor bundle $T:\CP^{7}\to\HP^{3}$. This is a Riemannian submersion and a fibration with fibres homeomorphic to $Sp(1)/\S^1\simeq \S^{2}$, so that we can proceed as before to get the following:

\begin{center}
\begin{tikzcd}
f^*(\CP^7)\arrow[r,"\tilde{F}"]\arrow[d,"\bar{T}"]& \CP^7\arrow[d,"T"]\\
M\arrow[r,"f"]&\HP^3
\end{tikzcd}
\end{center}

Once again $\tilde{F}:f^{*}(\CP^{7})\to\CP^{7}$ is a \hili{homotopy equivalence} and we can endow $f^{*}(\CP^{7})$ with a metric that makes $\bar{T}$ a Riemannian submersion. We can thus pullback $\fol$ through $\bar{T}$ to get the Riemannian foliation $$(f^{*}(\CP^{7}),(\bar{T})^{-1}(\fol)=:\tilde{\fol}),$$ whose leaves have dimension $\dim \mathcal{L}+\dim T^{-1}(p)=6$. However, by the previous section there are no 6-dimensional foliations on manifolds \hili{homotopic} to $\CP^{2n+1}$. Therefore, no non-trivial Riemannian foliations can occur on spaces \hili{homotopic} to $\HP^{n}$.

\section{$M$ \hili{homotopic} to $\OPP$}

We are left with the easiest case, in which $M$ is \hili{homotopic} to $\OPP$. Recall that $H^{*}(M;\Z_{2})\cong\Z_{2}[c]/(c^{3})$ with $c$ generator of $H^{8}(M;\Z_{2})$. Furthermore, since the total Stiefel-Whitney class $w(TM)$ is a homotopy invariant, we have $w(TM)=w(T\OP^2)=1+c+c^{2}$ (see \cite{milnor}, p. 134).

If a (not necessarily Riemannian) foliation were to exist on $M$, just as in Section \ref{SS:case2} we would have $TM\cong\mathcal{H}\oplus\mathcal{V}$ and $w(TM)=w(\mathcal{V})\cup w(\mathcal{H})$. Since $\mathcal{H}$ and $\mathcal{V}$ have rank strictly lower than $16$ (as otherwise the leaves would either be $0$ or $16$-dimensional and the foliation would be trivial), their total Stiefel-Whitney classes must be of the form $w(\mathcal{V})=1+ac$, $w(\mathcal{H})=1+bc$ for some $a,b\in\Z_{2}$, resulting in $$1+c+c^{2}=(1+ac)(1+bc)=1+(a+b)c+ab c^2\qquad\textrm{in }\Z_{2}[c]/(c^{3})$$
which is impossible. Thus, $M$ does not admit any non-trivial foliation.

\section{\hili{Foliations on exotic $\CP^n$'s}}
In this section we prove Theorem \ref{MT:exotic}.

Let $(M, \F)$ be a Riemannian foliation on a manifold $M$ homotopy equivalent to $\CP^{2m+1}$. By Theorem \ref{thmfoliationsonCROSSes}, $\F$ is simple and given by the fibers of a Riemannian submersion $M\to B$ obtained by pulling back the twistor fibration $\CP^{2m+1}\to \HP^m$ via a homotopy equivalence $\varphi:B\to \HP^m$:

\begin{center}
\begin{tikzcd}
M\arrow[r,"f"]\arrow[d,"\pi"]& \CP^{2m+1}\arrow[d,"T"]\\
B\arrow[r,"\varphi"]&\HP^m
\end{tikzcd}
\end{center}
Notice that $\pi^*:H^*(B;\Z)\to H^*(M;\Z)$ is injective, and $f^*:H^*(\CP^{2m+1};\Z)\to H^*(M;\Z)$ is an isomorphism. In particular, we can use $f^*$ to identify $H^*(\CP^{2m+1};\Z)$ with $H^*(M;\Z)$, and $\pi^*$ to identify $H^*(B;\Z)$ with its image in $H^*(M;\Z)$.

Let $\mathcal{V}\to \CP^{2m+1}$ denote the vertical bundle $\ker T$. Then, using the identifications above, one has the following:
\[
TM\simeq f^*(\mathcal{V})\oplus \pi^*TB\qquad \Rightarrow\qquad p(TM)=p(TB)\cup p(\mathcal{V})
\]
Since $p(\mathcal{V})\in H^*(M;\Z)$ is invertible and independent of $M$ and $B$, we see that $p(TM)$ and $p(TB)$ uniquely determine each other. Furthermore, from $\varphi=id$ and $\pi=T$ it follows that $p(TM)=p(T\CP^{2m+1})$ if and only if $p(TB)=p(T\HP^m)$.

If $m=1$ then $B$ is 4-dimensional and homotopy equivalent to $\HP^1$. By e.g. Hirzebruch's Signature Theorem, we have $p(TB)=p(T\HP^1)$ hence, by the discussion above, $p(TM)=p(T\CP^3)$. Since $M$ is homotopy equivalent to $\CP^3$, in particular it is simply connected, with $w_2(TM)=w_2(T\CP^3)=0$. By Wall's classification of closed simply connected spin 6-manifolds \cite[Theorem 5]{Wal66}, $M$ and $\CP^3$ are diffeomorphic because they have the same invariants, namely $H^2(M,\Z)\simeq \Z$, $H_3(M,\Z)=0$, Pontryagin class $p_1(TM)$, and cubic intersection form $\mu:H^2(M;\Z)^3\to \Z$, $\mu(x,y,z)=[M]\cap(x\cup y\cup z)$ (in this case, $\mu(a,b,c)=abc$).

Suppose now that $m\geq 2$. By \cite[Theorem 2]{Hsi66}, there are infinitely many manifolds $B_i$ homotopy equivalent to $\HP^m$ but with mutually distinct rational Pontryagin classes $p(TB_i)$. Letting $\varphi_i:B_i\to \HP^m$ the  homotopy equivalences and $M_i$ the corresponding pullbacks of $T$ through $\varphi_i$, we thus obtain that $p(M_i)$ are also mutually distinct.




\printbibliography[heading=bibintoc, title={Bibliography}]

@article{lytchakwilking,
   title={Riemannian foliations of spheres},
   volume={20},
   ISSN={1465-3060},
   DOI={10.2140/gt.2016.20.1257},
   number={3},
   journal={Geometry \& Topology},
   publisher={Mathematical Sciences Publishers},
   author={Lytchak, Alexander and Wilking, Burkhard},
   year={2016},
   month=jul, 
   pages={1257–1274}
}

@book{milnor,
  title={Characteristic Classes},
  author={Milnor, J.W. and Stasheff, J.D.},
  isbn={9780691081229},
  lccn={lc72004050},
  series={Annals of mathematics studies},
  year={1974},
  publisher={Princeton University Press}
}

@article {gromollgrove,
    AUTHOR = {Gromoll, Detlef and Grove, Karsten},
     TITLE = {The low-dimensional metric foliations of {E}uclidean spheres},
   JOURNAL = {J. Differential Geom.},
  FJOURNAL = {Journal of Differential Geometry},
    VOLUME = {28},
      YEAR = {1988},
    NUMBER = {1},
     PAGES = {143--156},
      ISSN = {0022-040X,1945-743X},
   MRCLASS = {53C12 (57R30)},
  MRNUMBER = {950559},
MRREVIEWER = {Richard\ H.\ Escobales, Jr.}
}

@article{browder,
    ISSN = {00029947},
    author = {Browder, William},
    journal = {Transactions of the American Mathematical Society},
    number = {2},
    pages = {353--375},
    publisher = {American Mathematical Society},
    title = {Higher Torsion in H-Spaces},
    volume = {108},
    year = {1963}
}

@article{Escobales,
    ISSN = {00029939, 10886826},
    author = {Escobales, Richard H.},
    journal = {Proceedings of the American Mathematical Society},
    number = {2},
    pages = {280--284},
    publisher = {American Mathematical Society},
    title = {Sufficient Conditions for a Bundle-Like Foliation to Admit a Riemannian Submersion onto its Leaf Space},
    volume = {84},
    year = {1982}
}

@article{Hermann,
 ISSN = {0003486X, 19398980},
 author = {Hermann, Robert},
 journal = {Annals of Mathematics},
 number = {3},
 pages = {445--457},
 publisher = {[Annals of Mathematics, Trustees of Princeton University on Behalf of the Annals of Mathematics, Mathematics Department, Princeton University]},
 title = {On the Differential Geometry of Foliations},
 volume = {72},
 year = {1960}
}

@article {Wil01,
    AUTHOR = {Wilking, Burkhard},
     TITLE = {Index parity of closed geodesics and rigidity of {H}opf
              fibrations},
   JOURNAL = {Invent. Math.},
  FJOURNAL = {Inventiones Mathematicae},
    VOLUME = {144},
      YEAR = {2001},
    NUMBER = {2},
     PAGES = {281--295},
      ISSN = {0020-9910,1432-1297},
   MRCLASS = {53C12 (53C22 53C24)},
  MRNUMBER = {1826371},
       DOI = {10.1007/PL00005801},
}

@article {Ran85,
    AUTHOR = {Ranjan, Akhil},
     TITLE = {Riemannian submersions of spheres with totally geodesic
              fibres},
   JOURNAL = {Osaka J. Math.},
  FJOURNAL = {Osaka Journal of Mathematics},
    VOLUME = {22},
      YEAR = {1985},
    NUMBER = {2},
     PAGES = {243--260},
      ISSN = {0030-6126},
   MRCLASS = {53C20},
  MRNUMBER = {800969},
MRREVIEWER = {Richard\ H.\ Escobales, Jr.},
}

@article {Des02,
    AUTHOR = {Dessai, Anand},
     TITLE = {Homotopy complex projective spaces with {$Pin(2)$}-action},
   JOURNAL = {Topology Appl.},
  FJOURNAL = {Topology and its Applications},
    VOLUME = {122},
      YEAR = {2002},
    NUMBER = {3},
     PAGES = {487--499},
      ISSN = {0166-8641,1879-3207},
   MRCLASS = {58J26 (11F23 19L47 57R20 57S25)},
  MRNUMBER = {1911696},
MRREVIEWER = {Xiaonan\ Ma},
       DOI = {10.1016/S0166-8641(01)00192-4},
       url = {https://www.sciencedirect.com/science/article/pii/S0166864101001924}
}

@book {BT82,
    AUTHOR = {Bott, Raoul and Tu, Loring W.},
     TITLE = {Differential forms in algebraic topology},
    SERIES = {Graduate Texts in Mathematics},
    VOLUME = {82},
 PUBLISHER = {Springer-Verlag, New York-Berlin},
      YEAR = {1982},
     PAGES = {xiv+331},
      ISBN = {0-387-90613-4},
   MRCLASS = {57R19 (55-02 58-01 58A12)},
  MRNUMBER = {658304},
MRREVIEWER = {Hansklaus\ Rummler},
}

@article{Sma,
 ISSN = {00029939, 10886826},
 URL = {http://www.jstor.org/stable/2033664},
 author = {Stephen Smale},
 journal = {Proceedings of the American Mathematical Society},
 number = {4},
 pages = {621--626},
 publisher = {American Mathematical Society},
 title = {Diffeomorphisms of the 2-Sphere},
 volume = {10},
 year = {1959}
}

@article{Hsi66,
    AUTHOR = {Hsiang, Wu-chung},
     TITLE = {A note on free differentiable actions of {$S\sp{1}$} and
              {$S\sp{3}$} on homotopy spheres},
   JOURNAL = {Ann. of Math. (2)},
  FJOURNAL = {Annals of Mathematics. Second Series},
    VOLUME = {83},
      YEAR = {1966},
     PAGES = {266--272},
      ISSN = {0003-486X},
   MRCLASS = {57.47},
  MRNUMBER = {192506},
MRREVIEWER = {F.\ Hirzebruch},
       DOI = {10.2307/1970431},
       URL = {https://doi.org/10.2307/1970431},
}

@article{Wal66,
	title = {Classification problems in differential topology. {V}. {On} certain 6-manifolds},
	volume = {1},
	issn = {0020-9910,1432-1297},
	url = {https://doi.org/10.1007/BF01425407},
	doi = {10.1007/BF01425407},
	journal = {Inventiones Mathematicae},
	author = {Wall, C. T. C.},
	year = {1966},
	mrnumber = {215313},
	pages = {355--374; corrigendum, ibid. 2 (1966), 306},
}

\end{document}